\documentclass[12pt]{amsart}

\usepackage[pdftex]{graphicx}

\usepackage[T2A]{fontenc}
\usepackage[utf8]{inputenc}
\usepackage[english]{babel}

\usepackage{amsmath}
\usepackage{amssymb}
\usepackage{amscd}
\usepackage{amsfonts,amsmath,amsthm}

\usepackage[matrix,arrow,curve]{xy}
\usepackage{xfrac}
\usepackage{color}

\newtheorem{Ex}{Example}
\newtheorem{theorem}{Theorem}
\newtheorem{prop}{Proposition}

\newtheorem{lemma}{Lemma}
\newtheorem*{main lemma}{The main lemma}

\newtheorem{definition}{Definition}

\title{Counting normals to closed curves in $\mathbb{R}^3$ }

\author{ Gaiane Panina, Dirk Siersma}

\keywords{Space curves, knots, normals. \ \   MSC  53A04,
57K10 }

\thanks{The second author acknowledges support from the project ``Singularities and Applications'' - CF 132/31.07.2023 funded by the European Union - NextGenerationEU - through Romania's National Recovery and Resilience Plan, and support by the grant CNRS-INSMI-IEA-329. 
\newline  The research was carried on in Batumi during the Winter Math Weeks 2025 and 2026. We thank our host George Khimshiashvili for hospitality and useful remarks.}
\date{\today}

\begin{document}

\begin{abstract}

  We prove the following results:
  
  (1) For every generic closed smooth curve in $\mathbb{R}^3$, there is a point with at least $6$  emanating normals to the curve.

   (2) For every generic closed piecewise linear curve in $\mathbb{R}^3$, there is a point with at least $8$  emanating normals to the curve.
If the curve is knotted, there is a point with at least $10$  emanating normals.

 The proof is based on the Morse theory for the squared distance function and self intersections of the focal surface.
\end{abstract}

\maketitle

\section{Preliminaries and basic definitions}

It  has long been conjectured that for every convex body $P\subset \mathbb{R}^n$, there exists a point in its interior   which belongs to at least $2n$ normals from different points on the boundary of $P$. The conjecture is proven for $n\leq 4$ \cite{Heil,Pardon}, and almost nothing is known for higher dimensions.

A similar problem, with appropriate definitions, makes sense for convex polytopes.
 It was proven recently \cite{NasPan,NasPan1} that each convex simple polytope $P\subset \mathbb{R}^n$  has a point with at least $2n+4$ normals (provided that $n>2$).

In particular, for curves in the plane, elementary facts are:

 For every smooth convex curve in the plane,  there is a point  with at least four normals to the curve. The bound is exact since an ellipse has no point with more than four normals. 

For every piecewise linear convex curve in the plane,  there is a point  with at least six normals to the curve. This bound is also exact since a triangle has no point with more than six normals. 

\newpage
In the paper we turn to closed curves in $\mathbb{R}^3$, either smooth or piecewise linear.

\begin{definition}

A segment $yx$ is called a normal to a curve $C$, emanating from the point  $y$,  if $x\in C$ and $x$ is either a local maximum or a local minimum of the squared distance function
$$SQD_y:C\rightarrow \mathbb{R}, \ \ \  SQD_y(x):=|x-y|^2.$$

The number of normals emanating from $y$ is denoted by $N(y)$.
\end{definition}

For a smooth curve $C$, a normal is orthogonal to the tangent vector at the point $x$. 

 In the piecewise linear case the definition agrees with the  geometric intuition: a segment $yx$ is a  normal to $C$ if and only if  the plane containing $x$ and orthogonal to  $xy$ intersects the curve  $C$ at  the point $x$ non-transversally.

\medskip

Our main results are:

 \begin{theorem} \label{Thm1}

   For every generic smooth closed curve in $\mathbb{R}^3$, there exists  a point $y$ with at least $6$ emanating normals.
            
\end{theorem}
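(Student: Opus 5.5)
We use Morse theory for the family of squared distance functions $SQD_y$, $y\in\mathbb{R}^3$, together with the structure of the focal surface of $C$. For $y$ off the focal set, $SQD_y$ is a Morse function on the circle $C$. Its critical points alternate between maxima and minima, so the number of critical points is even and every critical point is a normal. Thus $N(y)$ is even, at least $2$, and constant on each connected component of the complement of the focal surface $F$. Here $F$ is the set of $y$ for which $SQD_y$ has a degenerate critical point. For a smooth curve it is the envelope of the normal planes, a ruled surface swept by the polar lines: the line through the center of curvature parallel to the binormal. Crossing a sheet of $F$ transversally at a generic point is a fold (birth/death) of $SQD_y$, which changes $N$ by $\pm 2$. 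The side with more normals is the one where the two extra critical points exist. So the aim is a region of the complement of $F$ where $N\ge 6$.

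\emph{Step 1.} Near a generic point of $F$, one side has $N\ge 4$. Indeed, far away ($|y|\to\infty$) we have $N(y)\ge 2$, and since $C$ is closed, $F$ is nonempty; crossing a fold sheet into its ``inner'' side adds two normals. More carefully, I would show that for a generic point $p\in C$ with nonzero curvature, points $y$ on the normal plane at $p$ just beyond the polar line have $p$ as a nondegenerate critical point of the ``wrong'' index. Hence $N(y)\ge 4$ there, because the global maximum and global minimum of $SQD_y$ must be at other points.

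\emph{Step 2.} This is the key step. Find a generic point $y_0$ on a transversal self-intersection of $F$: two sheets of $F$, coming from distinct points $p_1,p_2\in C$, meet along a curve. Near $y_0$ the space is divided into four chambers, and the chamber lying on the ``inner'' side of both sheets has $N=N_0+4$, where $N_0$ is the value in the chamber on the outer side of both. Since $N_0\ge 2$, that chamber has $N\ge 6$. So the theorem reduces to the existence of a crossing of two fold sheets in which the two inner sides overlap, with the extra critical pairs born at distinct points of $C$. Only the independence of the two folds is needed, and genericity provides it.

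\emph{Step 3.} Prove that $F$ must have such a self-intersection. I would argue via polar lines: the polar line at $p\in C$ lies in the normal plane $\Pi_p$. For a closed curve, the normal planes at two far-apart points $p,q$ meet in a line, and the polar lines $\ell_p,\ell_q$ in the family of planes $\Pi_t$ must cross each other as $t$ varies around the closed curve. For example, $\Pi_p\cap\Pi_q$ meets $\ell_p$ at a point whose position relative to $\ell_q$ changes sign as $q$ goes once around $C$; this is a degree/intermediate value argument on the torus $C\times C$. The crossing point lies on two sheets. One then must check that the inner sides overlap. If they do not, replace the chosen sheet by the opposite one, using that the sign configuration must be mixed around the torus. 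This step is the main obstacle: ensuring that the crossing is not merely geometric but produces the $+2+2$ chamber, not a $+2-2$ one. As a fallback, I would use a global count: the Morse inequality / Euler characteristic computation of $\int (N(y)-2)$ over a large ball against the volume swept between polar lines. This shows $N$ cannot be $\le 4$ everywhere, since where $N\le 4$ the inner regions of different sheets would have to be disjoint, contradicting the forced crossing.
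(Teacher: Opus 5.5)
\textbf{Gap: Step 3 carries the whole theorem and is not proved.}

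Your reduction in Step 2 is sound, and simpler than you fear. At a transverse crossing of two fold sheets coming from distinct points $p_1\neq p_2$ of $C$, the two births are independent and each sheet has its own ``inner'' side. Two transverse half-spaces always share a quadrant, so the four local chambers always carry $N_0, N_0+2, N_0+2, N_0+4$. There is no ``$+2-2$'' configuration to rule out, and $N_0\ge 2$ gives $6$. This is the paper's Remark after the proof, with $N_0\ge 2$ in place of $N_0\ge 4$.

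The genuine gap is Step 3. You need $p\neq q$ with $L(p)\cap L(q)\neq\emptyset$ and $\mathbf{t}(p)\neq\pm\mathbf{t}(q)$, and the claimed sign change is never pinned down. Made concrete: for fixed $p$, the map $q\mapsto z_q=L(p)\cap q^{\perp}$ sends $C$ to $L(p)\cup\{\infty\}\cong S^1$. Its fibre over a generic $y\in L(p)$ is the set of critical points $\neq p$ of $SQD_y$. Since $dz_q/dq$ is proportional to $(SQD_{z_q})''(q)/\langle \mathbf{b}(p),\mathbf{t}(q)\rangle$, the self-intersections of $\mathcal{F}(C)$ along $L(p)$ are exactly the critical points of this map. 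At $q=p$ it extends by the centre of the osculating sphere and, for generic $p$, is a local diffeomorphism there. It passes through $\infty$ at the zeros of $\langle\mathbf{b}(p),\mathbf{t}(q)\rangle$. Nothing in your argument excludes that this map is simply a covering of $S^1$, in which case there is no sign change at all. The fallback with $\int(N-2)$ has no formula behind it either, and the relevant regions are unbounded. So the existence of a transverse self-intersection, a statement at least as strong as the theorem, is assumed rather than shown.

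\textbf{The paper's route, and a caution about it.} The paper does not use self-intersections at all. It takes $y=a^\perp\cap b^\perp\cap c^\perp$, so $a,b,c$ are normals by construction; if all three have the same type, then $N(y)\ge 6$ immediately. Otherwise, assuming $N\le 4$ everywhere, it rotates the triple to $(b,c,a)$. This returns to the same $y$ but turns the pattern (max,max,min) into (max,min,max). Along the way, a birth would give $6$ normals, a death is impossible since four critical points are always present, and passing through infinity flips all three types at once.

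If you take this route, there is a case the paper's analysis does not mention. When $y_t$ meets the polar line $L(b_t)$ of a tracked point, $\dot y_t$ lies in the normal plane $b_t^\perp$, which is tangent to $\mathcal{F}(C)$ along $L(b_t)$, so transversality fails. There $b_t$ changes type by passing through the fourth critical point, with no birth or death. Such events are in fact forced. The fourth critical point must move from the arc between $a_t,b_t$ to the arc between $c_t,a_t$, and passing through infinity does not change the cyclic order of the critical points on $C$. So this case needs a separate argument.
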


As our experience  \cite{NasPanSiersma,NasPan,NasPan1} suggests, piecewise linear objects  have more normals than smooth ones.
So  the reader should not be surprised to see the following theorem:

\begin{theorem}\label{Thm1Knot}
  For every generic non-planar piecewise linear curve in $\mathbb{R}^3$, there is a point with at least $8$ emanating normals.

   If the curve is knotted, there is a point with at least $10$ normals.
\end{theorem}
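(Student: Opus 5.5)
We use Morse theory for the squared distance function $SQD_y$ on the polygon $C$, together with an analysis of how $N(y)$ changes as $y$ moves in $\mathbb{R}^3$. Let the vertices of $C$ be $v_1,\dots,v_m$ and the edges $e_i=[v_i,v_{i+1}]$.

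\textbf{Local structure.} For a generic PL curve and a generic point $y$:
\begin{itemize}
\item A vertex $v_i$ is a local minimum of $SQD_y$ exactly when $y$ lies in the intersection of the two half-spaces $\langle y-v_i, v_{i\pm1}-v_i\rangle\le 0$. This is a dihedral wedge bounded by the planes through $v_i$ orthogonal to the adjacent edges.
\item $v_i$ is a local maximum in the opposite wedge.
\item An interior point of $e_i$ is a critical point (always a minimum) exactly when $y$ lies in the slab over $e_i$, between the two planes orthogonal to $e_i$ through its endpoints.
\end{itemize}
So $N(y)$ is a sum of indicator functions of slabs and wedges, and it is constant on the chambers of an arrangement of planes. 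Morse theory on the circle gives $\#\max=\#\min$ for every generic $y$, so $N(y)$ is even, $N(y)=2\#\min(y)$, and it suffices to find $y$ with $\#\min(y)\ge4$ (respectively $\ge5$ in the knotted case).

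\textbf{Counting minima via a Gauss-map/degree argument.} The key step is to integrate, or take a suitable degree of, $\#\min(y)$ over $y$. Each vertex $v_i$ has wedge angles $\alpha_i$ (minimum wedge) and $\pi-\alpha_i$ (maximum wedge), with $\alpha_i$ the turning angle at $v_i$. Each edge contributes a slab.

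We look at a large sphere $|y|=R$ as $R\to\infty$. In direction $u$, the number of minima of $SQD_y$ tends to the number of local minima of the height function $x\mapsto -\langle u,x\rangle$ on $C$. This number is at least $1$ and is at least $2$ on a set of directions whose measure is controlled by the total curvature $\sum\alpha_i$. Fenchel gives $\sum \alpha_i\ge 2\pi$, with equality only for planar convex curves. Fáry--Milnor gives $\sum\alpha_i>4\pi$ for knots, so for a knotted curve some direction sees at least $3$ minima of the height function.

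This far-away information alone yields only $4$ or $6$ normals, so the slab contributions must be exploited at finite $y$. The plan is to choose $y$ near an edge or vertex at which several slabs and wedges overlap. For a PL curve, every point $y$ in the slab of $e_i$ that is very close to $e_i$ has the foot point on $e_i$ as a minimum. In addition, $SQD_y$ restricted to $C\setminus e_i$ behaves like a height-type function on the remaining polygon, with the global minimum elsewhere. Thus we take $y$ near the focal-type locus where slabs of non-adjacent edges intersect and apply the count "$\#\min$ of $SQD_y$ on $C$ minus the near edge" $\ge$ ($\#\min$ of a height function) $+1$.

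For the bound $8$: we pick $y$ far away in a direction $u$ where the height function has $2$ minima on the non-planar curve; such directions exist by Fenchel with strict inequality. We then slide $y$ along $u$ toward the curve until the first point where the level sets create an additional minimum, using that wedges and slabs tile space. For the knotted case we start from a direction with $3$ minima, by Fáry--Milnor, and perform the same move, obtaining $5$ minima and hence $10$ normals.

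\textbf{Expected main obstacle.} The hardest step is showing that moving $y$ from infinity toward the curve \emph{gains} a minimum without losing one. For generic $y$, $\#\min$ changes only when $y$ crosses a boundary plane of a wedge or slab. I would first try to prove that the first crossing encountered along a well-chosen ray always enters a slab while staying inside the existing wedges. This is essentially the self-intersection of the "focal" arrangement mentioned in the abstract, and it may require a careful choice of the ray, for instance through a point of $C$ realizing the extremal height.
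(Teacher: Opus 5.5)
The gap is the step you flag yourself as the main obstacle. It is not a technicality: it is the whole content of both bounds.

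Along a ray coming in from infinity, $\#\min(y)$ changes by $\pm1$ at each crossing of a bifurcation sheet. Nothing forces these changes to be gains, so the first crossing may just as well be a death. Even if the first event is a birth, ``the first point where an additional minimum is created'' gives only one extra minimum: $6$ normals from a direction with two minima (resp.\ $8$ from one with three), not the two extra minima you need.

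The second extra minimum would have to come from your near-edge count $\#\min \ge \mu(u)+2$. That count rests on the claim that $SQD_y|_{C\setminus e_i}$ ``behaves like a height function'' for $y$ close to $e_i$, and this is unfounded. Near $C$ the function $SQD_y$ is approximately the squared distance from a point of $C$, and the number of its minima is not controlled by Fenchel or F\'ary--Milnor.

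Two smaller inaccuracies: the maximum wedge at $v_i$ is vertically opposite to the minimum wedge, so both have dihedral angle $\alpha_i$. Also, only the faces of the maximum wedges are bifurcation sheets; crossing a face of a minimum wedge merely transfers a minimum between $v_i$ and an adjacent edge.

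What is missing are the ideas that replace the ray argument.

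For the first statement no chamber analysis is needed. Since $C$ is non-planar, there is a sphere through four vertices of $C$ containing all of $C$ (a vertex of the farthest-point Voronoi diagram of the vertex set). At its centre $O$ these four vertices are maxima of $SQD_O$. Alternation of maxima and minima then gives four minima, so $N(O)\ge 8$.

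For knots you need two ingredients.
\begin{enumerate}
\item A lower bound at every finite point, not only at infinity. If $N(y)=2$, then on each sphere centred at $y$ the two points of $C$ can be joined by the shortest geodesic arc. These arcs sweep an embedded disk bounded by $C$, so a knotted $C$ has $N\ge 4$ everywhere.
\item A point where faces of the maximum wedges of three different vertices meet transversally. Co-orient each sheet toward increasing $N$; the eight chambers around such a point then carry $N,N+2,N+4,N+6$ normals, so one of them has at least $10$.
\end{enumerate}
Finding the triple point is the real work. The paper tiles the sphere at infinity by the lunes of the wedges and shows by an angle-defect count that some tile has at least five sides. A second count on the unbounded chamber over that tile gives a bounded non-triangular face, which must have a vertex with no incident ridge edge.

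Your Fenchel/F\'ary--Milnor input at infinity is correct, but it is not where the bound comes from. The same sheet-crossing idea, applied at a transverse crossing of two lunes, already gives a direction with at least $4$ minima for a knot, more than the $3$ you extract from F\'ary--Milnor.
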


\medskip

There arises a natural question:
{ Is it true that  for every generic smooth knotted closed curve in $\mathbb{R}^3$, there exists  a point $y$ with at least $8$ emanating normals?}

\section{Smooth curves. Proof of Theorem \ref{Thm1}}

Throughout the paper we assume that $C$ is a generic  curve embedded in $\mathbb{R}^3$. From now on, we call local maxima and local minima of
$SQD_y$  maxima and minima,  for brevity. 

A simple observation is:  the number of  minima equals the number of  maxima, and they alternate on the curve (between each two minima there is a maximum, and vice versa).

\begin{lemma}\label{LemmaKnot4} 
  For an embedded closed curve $C$,  either smooth or piecewise linear, if there exists $y$ with $N(y)=2$, then $C$ is an unknot.
\end{lemma}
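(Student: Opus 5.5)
If $N(y)=2$, the function $SQD_y$ restricted to $C$ has exactly one local minimum $x_0$ and one local maximum $x_1$. My plan is to use this to show that $C$ lies on a family of nested spheres centred at $y$, meeting each one in at most two points, and then to unknot it radially.

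First I would set $r_0=|x_0-y|$ and $r_1=|x_1-y|$. The two points split $C$ into two arcs $A$ and $B$. Since there are no other local extrema, $SQD_y$ is strictly monotone along each arc. I would check this directly from the definition: a continuous function on an arc with no interior local extremum is strictly monotone. This covers the piecewise linear case as well, because the definition of a normal there is via local extrema. Consequently, for each $r\in(r_0,r_1)$ the sphere $S_r$ of radius $r$ centred at $y$ meets $C$ in exactly two points, $a(r)\in A$ and $b(r)\in B$. Genericity gives $y\notin C$, so $r_0>0$.

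Next I would use radial projection $\pi:\mathbb{R}^3\setminus\{y\}\to S^2$. The curve $C$ becomes the union of two graphs over the radius, $r\mapsto(r,\pi(a(r)))$ and $r\mapsto(r,\pi(b(r)))$, in the region $(0,\infty)\times S^2$. These two graphs are disjoint for $r\in(r_0,r_1)$ and meet at the endpoints $r_0$ and $r_1$.

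To build the isotopy, I would choose for each $r$ a rotation $R_r\in SO(3)$ depending continuously on $r$ such that $R_r\pi(a(r))=p$ and $R_r\pi(b(r))$ lies on a fixed great circle through $p$. Since $SO(3)$ acts transitively on pairs of distinct points at a given angle, I would lift a path in the space of such pairs. Applying $R_r$ levelwise on the spheres defines a level-preserving ambient isotopy of $\mathbb{R}^3$ (extended by the identity near $y$ and near infinity via a cutoff) that carries $C$ into a plane through $y$. A closed embedded curve in a plane is unknotted by the Jordan--Schoenflies theorem.

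The delicate step is the behaviour at the endpoints $r_0$ and $r_1$, where $a(r)$ and $b(r)$ collide and the angle between them tends to $0$, so the rotation is not well defined. My first approach would be to avoid the issue by a simpler argument. Cut $C$ by the spheres $S_{r_0+\varepsilon}$ and $S_{r_1-\varepsilon}$. Near $x_0$ and near $x_1$ the curve is a small unknotted arc inside a ball, a trivial tangle. In the middle shell $S^2\times[r_0+\varepsilon,r_1-\varepsilon]$ the curve is a two-strand braid, since both strands are monotone in the radius. Two-strand braids in $S^2\times I$ are trivial up to isotopy fixing the ends, because the configuration space of two points on $S^2$ is homotopy equivalent to $\mathbb{RP}^2$ and its fundamental group is finite. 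Even a nontrivial full twist can be undone by rotating a cap. The closure of a trivial two-strand tangle capped by two trivial arcs is the unknot. Equivalently, $C$ bounds an embedded disk made of the radial segments joining $a(r)$ to $b(r)$ inside $S_r$ along a shortest arc, provided $a(r)$ and $b(r)$ are never antipodal; genericity together with a small perturbation of $y$ should arrange this.
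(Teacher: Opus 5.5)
Your proposal is correct, and its backbone coincides with the paper's. From $N(y)=2$ you get that every sphere $S_r$ centred at $y$ meets $C$ in at most two points; your remark that a continuous function on an arc with no interior local extremum is strictly monotone justifies what the paper only asserts. Your closing ``Equivalently'' sentence is exactly the paper's proof: the shortest great-circle arcs in $S_r$ from $a(r)$ to $b(r)$ sweep out an embedded disk bounded by $C$. The paper gets non-antipodality by perturbing $C$. Perturbing $y$ also works, since antipodality of $a(r),b(r)$ means $y$ is the midpoint of a chord of $C$. Chord midpoints form a $2$-dimensional set, while $N(y)=2$ is an open condition for generic $y$.

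Your two main finishes are genuinely different routes.

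The levelwise rotation into a plane works, and the endpoint problem you flag does not actually arise. The plane through $y,a(r),b(r)$ converges as $r\to r_0$ or $r\to r_1$, to the plane through $y$ containing the limiting direction of the chord $a(r)b(r)$ at $x_0$, resp.\ $x_1$. This limit exists for smooth and PL curves alike, so $R_r$ extends continuously. What does need care are interior levels where $a(r)$ and $b(r)$ become antipodal; there continuity holds when the crossing is transverse.

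The braid/tangle finish avoids antipodality altogether, but the reason you give is not the right one. Finiteness of $\pi_1(\mathbb{RP}^2)$ does not make braids trivial; indeed the half twist is nontrivial in $B_2(S^2)\cong\mathbb{Z}/2$. What makes it work is that the strands are labelled by the arcs $A$ and $B$. So the braid is a path in the ordered configuration space $F_2(S^2)\simeq S^2$, which is simply connected. Alternatively, as you say, any twisting can be absorbed by rotating a cap.

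In short, the paper's disk argument is the shortest route and needs only the non-antipodality perturbation. Your braid argument trades that genericity for a small piece of configuration-space topology.
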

\begin{proof}
We mimic one of the proofs of the Milnor-F\'{a}ry theorem, see \cite{Petrunin}.  Consider the family of concentric spheres $S_r$ of radius $r$ centered at $y$. By assumption, each such sphere intersects $C$ at most twice.
By a slight perturbation of $C$, we may assume that the two intersection points are never antipodal. Connect the intersection points by the (uniquely defined) shortest geodesic segment.
The union of all such segments is an embedded disk $d$ with the property $C=\partial d$, hence $C$ is unknotted.
\end{proof}
\bigskip

For  a smooth curve $C$ and a point $x\in C$ let us fix the following{ notation:}
 $h(x)$  is the osculating plane, $x^{\perp}$ is the normal plane,  the point
   $F(x) \in h(x)$ is the center of the osculating circle. A generic curve has an everywhere non-zero curvature, so we may assume that $ F(x)$ is not at the infinity.  
 
$L(x)$ is the line passing through $F(x)$ and orthogonal to $h(x)$. That is, $L(x)$ goes in the direction of the binormal $\textbf{b}(x)=\textbf{t}(x)\times \textbf{n}(x)$.
 The \textit{focal surface} $\mathcal{F}(C)$  is the union of all these lines.  We refer the reader to \cite{IzmFuchs} and \cite{Uribe-Vargas} for more details on spatial curves.

 $\mathcal{F}(C)$ is the\textit{ bifurcation set} of the function $SQD_y$. That is,  $\mathcal{F}(C)$ cuts the space into\textit{ chambers}.
  In each of the chambers the number of normals $N(y)$ persists, and if $y$ crosses a sheet of $\mathcal{F}(C)$ transversally, either two normals are born, or two normals die.
 
 \medskip

Here is the proof of Theorem \ref{Thm1}:

   Assuming the contrary, pick a triple of  points $a,b,c \in C$ such that the normal planes   $a^\perp, b^\perp, c^\perp$ intersect at a unique point $y=y(a,b,c)$. Then generically  $a,b,c$ are some minima and maxima of $SQD_y$.  If they are all minima (resp., all maxima), then there are necessarily also $3$ maxima (resp., $3$ minima), and hence $N(y)\geq6$.

  \medskip
      So assume that $a,b,c$ yields (max,max,min). There is exactly one more minimum of $SQD_y$, otherwise there are $6$ normals.  Let the triple $a_t,b_t,c_t$ move  continuously along the curve  from $a,b,c$ to  $b,c,a$. Eventually we get (max,min,max).
 We may assume that the path is such that the point $y(a_t,b_t,c_t)$ intersects the bifurcation set $\mathcal{F}(C)$ transversally.  So we move from (max,max,min) to (max,min,max). Therefore on the way there is either a bifurcation, or $y(a_t,b_t,c_t)$  passes through infinity.  Let us analyze these two possibilities.
  
 (1) A bifurcation means either birth or death of two critical points. If two new critical points arise,  we have six of them right after the bifurcation.
 In our setting the death of two critical points is impossible since on the path we have always at least four of them.
 So  there are no bifurcations.
 
 (2)  Passing through infinity turns minima to maxima and vice versa. So (max,max,min) may turn to (min,min,max) but not to (max,min,max).
 
 \medskip
 
\textbf{ Remark.} Assume that $C$ is knotted, and the focal surface $\mathcal{F}(C)$ has a transverse self-intersection. Then there is a point with at least $8$
emanating normals.

   Indeed, a neighborhood of a point of transverse self-intersection  of $\mathcal{F}(C)$
  is divided into four chambers by two sheets of $\mathcal{F}(C)$. Crossing of each  sheet changes $N(y)$ by $2$. Each sheet can be given a co-orientation in the
direction of the increasing number of normals. It follows, that in these
four chambers we have $N$, $N + 2$ and $N + 4$ emanating normals for
some $N$.  The value of  $N$ is at least  $4$    which implies the desired.

\begin{Ex}
  A planar ellipse in $\mathbb{R}^3$ has no point with  $6$ normals. However, its small perturbation yields points with $6$ normals. 
\end{Ex}

A natural question is about the maximum of $N(y)$. Clearly, inside the isotopy class of the curve one can arrange arbitrary high numbers. If one restricts to algebraic curves there will be  some upper bounds. This bound is especially studied in the context of Euclidean Distance degree (ED-degree) \cite{DHOST}. One considers as upper bound the number of critical points of the complexification of the variety with respect to the complex squared distance function. This number does not depend on $y$.

For smooth complete intersection curves in $\mathbb{R}^3$ with degree $(d_1,d_2)$ the ED-degree is $d_1 d_2 (d_1+d_2-2)$.

In the case of the lifted ellipse one could take $d_1=d_2=2$, so  the bound is $8$.

\section{{Piecewise linear curves. Proof of Theorem \ref{Thm1Knot}  }}

In this section we assume that $C$ is a piecewise linear non-planar generic curve embedded in $\mathbb{R}^3$.
In this case the maxima of $SQD_y$ are always attained at the vertices, whereas minima may be attained at both vertices and edges.

We start with the proof of the first statement of Theorem \ref{Thm1Knot}:
\begin{proof}
 There exists a superscribed sphere, that is, a sphere containing  $4$ vertices of the curve.  Take its center  $O$. The function $SQD_O$ has at least four maxima, hence four
 minima.
\end{proof}

Now let us  consider a knotted curve $C$ and prove the second statement.

\begin{proof}
  Let  us first describe the bifurcation set $\mathcal{B}(C)$. Each vertex $v$ of the curve with two emanating edges $l$ and $l'$ yields two half-planes 
  with a common boundary.  This is how it arises:  take normal planes $h$ and $h'$ to the edges $l$ and $l'$ such that $v\in l, l'$. Taken together, they divide the space into four domains. In exactly one of these domains the vertex $v$ is a maximum of $SQD$. The union of the two half-planes  is the boundary of this domain;  we call it \textit{the wedge  related to the vertex $v$}. The intersection $h\cap h'$ is called \textit{the ridge of the wedge}, see Fig. \ref{ridge}.
  
  \begin{figure}[h]
 \includegraphics[width=0.6\linewidth]{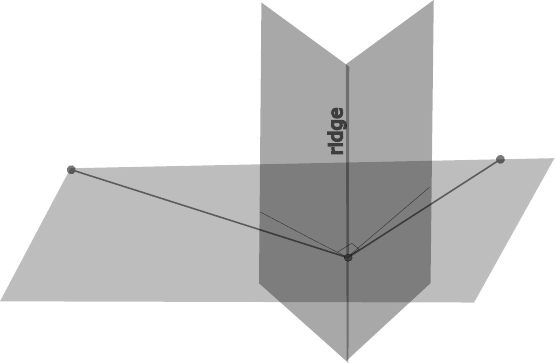}
  \caption {Each vertex of $C$ gives a wedge of  $\mathcal{B}(C)$. }
  \label{ridge}
\end{figure}

Each wedge cuts the space $\mathbb{R}^3$ into two connected components. If $y$ belongs to the smaller one, $yv$  is a normal which corresponds to a maximum.
  
   Whenever a point $y$ crosses a wedge transversely,
  there is a bifurcation involving the vertex $v$.  This means that two normals (one maximum related to $v$  and the other minimum related to either $l$ or $l'$) either die of is born.

  By definition, the union of all the wedges equals $\mathcal{B}(C)$.  The set $\mathcal{B}(C)$ cuts the space $\mathbb{R}^3$ into\textit{ chambers}.  
   Each chamber is some polytope, possibly non-convex, possibly unbounded.
   If a point moves in one and the same chamber (that is, crosses no wedge), the number of normals persists.

  \begin{prop}
    There is a point of triple intersection of three different wedges of $\mathcal{B}(C)$. 
  \end{prop}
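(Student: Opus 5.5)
The plan is to find the triple point as a vertex of a convex polyhedron built from the regions where vertices of $C$ are maxima. For a vertex $v$ with edges $l,l'$, let $D_v$ be the closed domain bounded by the wedge of $v$ in which $v$ is a maximum of $SQD_y$. It is exactly the set of $y$ for which $SQD_y$ does not increase when one leaves $v$ along $l$ or along $l'$. Each of these two conditions is a closed half-space bounded by the normal plane $h$ (resp. $h'$) of the corresponding edge at $v$. Hence $D_v$ is a convex dihedral domain, the intersection of two half-spaces, whose boundary is the wedge of $v$ and whose edge is the ridge $h\cap h'$.

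As in the proof of the first statement of Theorem \ref{Thm1Knot}, take the center $O$ of a sphere through four vertices $v_1,\dots,v_4$ of $C$. After a small perturbation, $O$ lies in the interior of $D_{v_1}\cap\dots\cap D_{v_4}$, or we use the four maxima of $SQD_O$ directly. Set
$$R=D_{v_1}\cap D_{v_2}\cap D_{v_3}\cap D_{v_4}.$$
This is a convex polyhedron with nonempty interior, cut out by eight half-spaces whose boundary planes are normal to the eight edges $l_i,l_i'$ issuing from the $v_i$. If these edge directions span $\mathbb{R}^3$, then $R$ contains no line, so it has a vertex $p$. For a generic curve, $p$ lies on exactly three of the eight bounding planes, and $p\in\partial R\subset\mathcal{B}(C)$. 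If the eight directions are coplanar, I would replace one $v_i$ by another maximum, or use the non-planarity of $C$ to choose a different superscribed sphere; this is a side issue.

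The three planes through $p$ belong to either three distinct wedges or two. In the first case, $p$ is the required triple point of three different wedges. In the second case, $p$ lies on the ridge of some $v_i$ and on one face of the wedge of some $v_j$. To exclude this, or to move past it, I would walk along the edges of $R$. The ridge of $v_i$, intersected with $R$, is an edge $e$ of $R$ (a segment or a ray), and $p$ is one of its endpoints. Consider also the edges of $R$ leaving $p$ that lie on the face of $v_j$ and on a face of $v_i$. Each such edge lies in the intersection of two different wedges, so its other endpoint, if it exists, is a point where three bounding planes meet and at most one of them belongs to a ridge.

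The main obstacle is to guarantee that some vertex of $R$ is of the triple-wedge type, and not a ridge-meets-face point. My first attempt would be a counting argument on the 1-skeleton of $R$. Each ridge contributes at most one edge of $R$, hence at most two ridge-type vertices, so there are at most $8$ such vertices. On the other hand, a bounded, or suitably truncated, polyhedron with up to eight facets has its vertex set forced to include points where three faces from different $D_{v_i}$ meet.

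If this counting fails, I would fall back on a direct argument. Start from the intersection curve of two wedges (those of $v_1$ and $v_2$) through a point of $\partial R$, and follow it to infinity. Far away in a direction $d$, the maxima of $SQD_y$ are governed by the local maxima of the linear function $-\langle x,d\rangle$ on $C$, which generically number fewer than four. Hence the curve must leave $D_{v_3}\cap D_{v_4}$, and the point where it exits is a triple point of three wedges.
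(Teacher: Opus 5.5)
There is a genuine gap at exactly the step you flag as the main obstacle, and it cannot be closed by looking at $R$ alone. $R$ may be cut out by just two of the $D_{v_i}$, and then every vertex of $R$ lies on a ridge. Here is an example:
\begin{enumerate}
\item Let $O=0$. Let $v_1=(0,0,1)$ have edges leaving in directions $(\pm1,0,-1)$, and let $v_2=(0,0,-1)$ have edges in directions $(0,\pm1,1)$.
\item Then $D_{v_1}=\{|y_1|\le 1-y_3\}$ and $D_{v_2}=\{|y_2|\le 1+y_3\}$. So $D_{v_1}\cap D_{v_2}$ is the bounded tetrahedron with vertices $(0,\pm2,1)$ and $(\pm2,0,-1)$.
\item Put $v_3,v_4$ at $(\pm1/\sqrt2,0,1/\sqrt2)$, with both edges pointing almost exactly at $O$. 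Then $D_{v_3},D_{v_4}$ are close to the half-spaces $\{\pm y_1+y_3\le\sqrt2\}$. They contain the tetrahedron, on which $\pm y_1+y_3\le 1$.
\item Close the curve up by knotted polygonal arcs inside the open unit ball.
\end{enumerate}
All these conditions are open, so the curve may be taken generic. The unit sphere is a superscribed sphere with centre $O$, and $R$ is exactly that tetrahedron. Its four vertices all lie on the ridges of $v_1$ and $v_2$.

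This breaks each part of your argument:
\begin{itemize}
\item Your counting claim is false: here there are four facets coming from two wedges and no vertex of triple-wedge type.
\item Walking along the edges of $R$ finds nothing, since $R$ has only these four vertices.
\item The fallback fails on the same example. The curve $\partial D_{v_1}\cap\partial D_{v_2}$ through $\partial R$ is the closed quadrilateral formed by the four non-ridge edges of the tetrahedron. It never reaches infinity and never leaves $D_{v_3}\cap D_{v_4}$.
\item The premise of the fallback is also unfounded. A generic linear function on $C$ can have many local maxima; the paper's last proposition gives one with four maxima on every knotted $C$. Moreover, along a line in $h_1\cap h_2$ the direction is orthogonal to the edges at $v_1$ and $v_2$, so the comparison with a linear function is degenerate exactly there.
\end{itemize}

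What is missing is a global reason for some chamber to have a \emph{bounded non-triangular face}. By Lemma \ref{LemmaR}, such a face has a vertex with no incident ridge edge, and that vertex is the desired triple point. The tetrahedron above shows that a region whose bounded faces are all triangles can have only ridge vertices, so a local construction around one point $O$ cannot suffice. The paper obtains such a face from the sphere at infinity:
\begin{enumerate}
\item The wedges trace lunes on the sphere at infinity.
\item Knottedness, via Lemma \ref{LemmaKnot4}, ensures every point at infinity is covered by at least two lunes, so every tile lies in an open hemisphere.
\item An angle-defect count, using that there are more than four lunes, yields a tile with at least five sides.
\item A second defect count shows that the unbounded chamber over that tile has a bounded face with at least four vertices.
\end{enumerate}
Note also that your main line never uses that $C$ is knotted, whereas the paper's argument depends on it at several points.
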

  
  Let us first show that\textbf{ the proposition implies the theorem.}  A point of triple intersection has $8$ incident chambers. The number of normals for these chambers is $N, \ N+2,\ N+4,$ and $N+6$ for some $N$. By Lemma \ref{LemmaKnot4},  we have  $N\geq 4$, which completes the proof.
  \medskip

  Now let us prove the proposition. 
  
   Given a chamber,
  some of its edges   are intersections of two different wedges, whereas some edges come from ridges. Call the latter ones \textit{ridge edges}.
  
  Genericity implies:
  \begin{lemma}
  \begin{enumerate}
    \item No chamber contains a straight line.
    \item A face of a chamber has no more than one  ridge edge. 
    \item Each vertex of a chamber has three emanating edges of the chamber.  At most one  of them is a ridge edge.
  \end{enumerate}
  \end{lemma}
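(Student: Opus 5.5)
We need to prove the three genericity claims about the chambers cut out by the wedges. Throughout, "generic" means the vertices of $C$ are in general position, so the normal planes to the edges at their endpoints are in general position as a finite family of affine planes. Every wedge lies in the union of two of these planes: the normal planes $h,h'$ at a vertex $v$ to its edges $l,l'$.

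\textbf{Plan for (1).} Suppose a chamber contains a line $\ell$ with direction $w$. The wedges are unbounded, so the real content is to rule out that $\ell$ avoids all wedges. The key observation concerns points $y=y_0+sw$ with $s\to\pm\infty$. For large $|s|$, whether a vertex $v$ is a maximum of $SQD_y$ is governed by the sign of $\langle y,\cdot\rangle$ restricted to the edge directions at $v$. Indeed, $v$ is a local maximum exactly when $\langle y-v, e\rangle<0$ and $\langle y-v,e'\rangle<0$ for the edge vectors $e,e'$ pointing out of $v$.

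Since $C$ is non-planar, the edge vectors span $\mathbb{R}^3$, and they sum to zero around the closed curve. Hence some edge vector $e$ has $\langle w,e\rangle\neq 0$, and the signs $\langle w,e\rangle$ cannot all agree. It follows that as $s$ goes from $-\infty$ to $+\infty$ the set of maximum vertices changes. Concretely, there is a vertex where the two incident edges have $\langle w,\cdot\rangle$ of opposite signs, or a pattern reversing between the two ends; at such a vertex the max/non-max status differs at $s=\pm\infty$. So $\ell$ must cross that vertex's wedge, a contradiction.

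I would handle the degenerate case $\langle w,e\rangle=0$ for some edge by genericity: only finitely many directions are orthogonal to an edge, and a slight tilt of $\ell$ stays inside the open chamber.

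\textbf{Plan for (2) and (3).} These are transversality counts. A face of a chamber lies in one half-plane, namely part of some normal plane $h$ of a wedge at $v$. The ridge edges on that face are intersections of $h$ with ridges $h_1\cap h_1'$ of other wedges, or the ridge $h\cap h'$ of its own wedge. A face lying in a half-plane has only its own ridge on its boundary line. A ridge of another wedge meets $h$ in a single point, since it is a line in general position with respect to the plane. So it cannot contribute an edge. This gives (2).

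For (3), a chamber vertex is either:
\begin{enumerate}
\item a point where three distinct normal planes (belonging to wedges) meet transversally, or
\item a point where a ridge meets a third plane.
\end{enumerate}
Genericity excludes four planes through a point, with the caveat that the two planes at a ridge count as two. Therefore, locally, the vertex looks like the intersection of three planes in general position, giving exactly three edges per chamber vertex. At most one pair of these planes forms a ridge, because two ridges would require four planes meeting, or else two wedges sharing a vertex of $C$. Adjacent vertices share an edge normal plane only if that edge's two normal planes are parallel, which means distinct planes, so this does not occur. Hence at most one emanating edge is a ridge edge.

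\textbf{Main obstacle.} I expect (1) to be the hardest step. The sign argument at infinity must genuinely force crossing a wedge rather than passing along the ridge. If it resists, I would instead argue that an unbounded chamber containing a line would contain points with $N(y)$ constant at both ends of $\ell$. Meanwhile, passing through infinity swaps minima and maxima, as in the smooth proof, which forces a change of the set of maximum vertices and hence a wedge crossing.
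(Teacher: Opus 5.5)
\textbf{There is a genuine gap in (1): the degenerate directions.} Your sign-at-infinity argument works when $w$ is orthogonal to no edge. Then $\langle w,\cdot\rangle$ has a strict extremum at a vertex $u$, the max/non-max status of $u$ differs at $s=\pm\infty$, and $\ell$ crosses the wedge of $u$. (Your criterion has the sign reversed: $v$ is a maximum iff $\langle y-v,e\rangle>0$ and $\langle y-v,e'\rangle>0$. This is harmless.)

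The tilting step fails, for two reasons. The directions orthogonal to an edge form a great circle, not a finite set. More importantly, an open chamber containing a line need not contain any non-parallel line.

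In fact the degenerate case is the \emph{only} one that can occur. $SQD_y$ always has a maximum at some vertex $v$. If $\ell$ misses every wedge, $v$ stays a maximum along all of $\ell$, so $\ell$ lies in the dihedral region $\{\langle y-v,e\rangle>0,\ \langle y-v,e'\rangle>0\}$. This forces $w\parallel e\times e'$, the ridge direction of $v$, which is orthogonal to both edges at $v$. Any tilt of $\ell$ leaves this region, i.e.\ crosses the wedge of $v$.

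In this direction the status of $v$ is constant along $\ell$, so a flipping vertex has to be found elsewhere. Neither your ``pattern reversing'' phrase nor your fallback provides one. The fallback claims that ``passing through infinity swaps minima and maxima'', which is false for $v$.

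The paper handles exactly this case. By Lemma \ref{LemmaKnot4}, $N(y)\ge 4$ for a knotted curve, so a second vertex $v'$ is a maximum along all of $\ell$. Then $\ell$ would be parallel to two ridges, contradicting genericity.

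You could also repair your own argument. Generically, no edge other than the two at $v$ is orthogonal to $e\times e'$. By non-planarity, some extreme level of $\langle w,\cdot\rangle$ on $C$ differs from the level of $v$. That level is attained only at vertices, each a strict extremum, and such a vertex flips status. This version does not even need knottedness.

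\textbf{Your inference for (2) is also not valid.} Your argument only shows that every ridge edge of a face lies on the boundary line of the face's own half-plane $H$. It does not show there is at most one, because a face can meet that line in several segments.

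For example, suppose the traces in $H$ of two other wedges are nested V's, with apices inside $H$ and all four rays ending on the ridge. Then the region between them (if no other wedge cuts it) is a bounded face with two ridge edges. This configuration survives small perturbations, so genericity does not exclude it.

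The paper itself only says that (2) and (3) ``follow from genericity'', so on this point you do not diverge from it. But the step ``one ridge line, hence one ridge edge'' does not hold as written. Your local analysis for (3) is fine.
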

  \begin{proof}
   The statements (2) and (3) follow from genericity, so let us prove (1).
    
    It suffices to prove that each line $L$ intersects the bifurcation set. Take the vertex $v$ of $C$ such that the function $SQD_y$ attains a local maximum at $v$ for some point  $y\in L$. If $L$ does not cross $\mathcal{B}(C)$, the function $SQD_y$ attains maximum at $v$ for all $y\in L$.  This happens only if $L$ is parallel to a ridge associated to the vertex $v$. On the other hand, in the knotted case such a vertex $v$ is not unique  by Lemma \ref{LemmaKnot4}. By genericity, no two ridges are parallel, so we have a contradiction.
  \end{proof}

    \begin{lemma}\label{LemmaR}
    If a bounded face of a chamber is not a triangle, then it has a vertex with no incident ridge edge. Such a vertex is a transverse intersection of three wedges.
   \end{lemma}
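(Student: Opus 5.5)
The plan is a counting argument on the boundary cycle of the face, using the genericity lemma just proved; the extra work goes into identifying what a non-ridge vertex is.

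Let $\Phi$ be a bounded face of a chamber. It is a planar polygon, possibly non-convex, lying in one half-plane of some wedge. Its boundary is a closed polygonal cycle with $k\ge 3$ vertices and $k$ edges. By item (2) of the preceding lemma, at most one edge of $\Phi$ is a ridge edge. Each ridge edge has exactly two endpoints on the boundary cycle of $\Phi$. So at most two vertices of $\Phi$ are incident to a ridge edge of $\Phi$ itself.

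For $k\ge 4$, at least $k-2\ge 2$ vertices of $\Phi$ meet no ridge edge \emph{of the face}. This is not yet the statement, because a vertex could meet a ridge edge of the chamber that leaves the plane of $\Phi$. I handle this with item (3). At a vertex $w$ of $\Phi$ there are three chamber edges: the two edges of $\Phi$ at $w$ and a third edge $e_w$ transverse to the plane of $\Phi$.

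\begin{itemize}
\item If $w$ lies on a ridge, the only half-planes through $w$ are the two half-planes of that wedge plus one further wedge. The ridge then continues through $w$ as edges of the chamber.
\item Since $\Phi$ lies in one half-plane $H$ of its wedge, a ridge through $w$ either lies in the plane of $H$ (as the boundary of $H$) or belongs to another wedge.
\item In the first case the ridge is an edge of $\Phi$.
\item In the second case the face $\Phi$ is contained in $H$, and $H$ crosses the other wedge's ridge transversely at $w$. Near $w$ the bifurcation set is then the two half-planes of that wedge together with $H$. By genericity such a point lies on only these three sheets. The chamber's three edges at $w$ are $H\cap(\text{half-plane}_1)$, $H\cap(\text{half-plane}_2)$ and the ridge piece. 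Hence two of the edges of $\Phi$ at $w$ are the intersections of $H$ with the two half-planes, and the ridge is $e_w$.
\end{itemize}

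So a vertex of $\Phi$ can be incident to a ridge edge either through an edge of $\Phi$ or through its transverse edge $e_w$. The counting must account for both.

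I would refine the count using the local picture in the second case. There the face $\Phi$ is split by the ridge point, and its two edges at $w$ lie on the two half-planes of one wedge $W'$. Walking along the boundary of $\Phi$, edges lie on successive wedges. A vertex of the second type is a point where two consecutive boundary edges lie on the same wedge $W'$ (its two half-planes).

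I expect the main obstacle to be showing that not every vertex of a non-triangular face is of one of the two ridge types. To deal with it, I would use that the edges of $\Phi$ lying on half-planes of a single wedge $W'$ are contained in the union of two lines in the plane of $H$, meeting at the point $H\cap\text{ridge}(W')$. Ridges in $H$ contribute at most one edge, so there are at most two vertices of the first type. If every vertex were of ridge type, the boundary of $\Phi$ would consist of consecutive edge pairs sharing wedges, joined at ridge crossings. Distinct wedges meet at vertices of non-ridge type, so there would be no such joins. The whole boundary of $\Phi$ would then lie on one wedge plus possibly the boundary line of $H$, that is, on at most three lines. That forces $\Phi$ to be a triangle, possibly degenerate. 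Hence a non-triangular face has a vertex whose three incident edges are all intersections of pairs of distinct wedges. By genericity, such a vertex is a transverse triple intersection of three distinct wedges, which is the claim of Lemma \ref{LemmaR}.
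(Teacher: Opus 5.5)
Your argument is correct, given the preceding genericity lemma, but it takes a different route from the paper.

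\textbf{The paper's route.} It argues combinatorially on the chamber. Suppose every vertex of the face $ABCD\ldots$ has an incident ridge edge. If no edge of the face is a ridge, then all the transverse edges $e_A,e_B,\dots$ are ridges. The neighbouring face across $AB$ then contains the two ridge edges $e_A,e_B$. If some edge $AB$ is a ridge, the same reasoning applied to the neighbouring face across an edge $CD$ disjoint from $A,B$ shows that $C$ or $D$ is ridge-free. Both cases contradict item (2).

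\textbf{Your route.} You stay in the plane of the face and label every non-ridge edge by the wedge it lies on. A vertex lying on a ridge is then either an endpoint of the ridge edge on $\partial H$, or the apex of the V-shaped trace of another wedge, where both incident edges carry the same label. So labels change only at ridge-free vertices. Without such vertices the boundary lies on $\partial H$ and a single V, i.e. on three lines, and the face is a triangle.

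\textbf{Comparison.} The paper's version is shorter. However, it applies item (2) to neighbouring faces too, and it relies on the cell structure of the chamber: each chamber edge lies in exactly two faces, and the transverse edges at $C$ and $D$ lie in a common face. Yours uses only planar geometry plus item (2) for $\Phi$ itself. It also identifies the exceptional faces exactly: the triangle cut out of $H$ by $\partial H$ and one V-trace.

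\textbf{A caution on item (2).} This applies to the paper's proof just as much as to yours. Your labelling shows that item (2) for $\Phi$ itself is the only thing that excludes a face bounded by $\partial H$ and two nested V-traces that both cross $\partial H$. The region between two such V's is a hexagon with two ridge edges on $\partial H$, and all six of its vertices lie on ridges. Such a configuration seems realizable by a polygon: two vertices of $C$ whose max-regions meet the plane of $H$ in nested angles opening towards $\partial H$, with all other wedges kept away from the region between them. It is also stable under small perturbations. So item (2) does not appear to follow from genericity alone, and the statement needs a separate justification of it. Your reduction is no weaker than the paper's on this point, and it makes the dependence on item (2) explicit.
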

   \begin{proof}
     Let the face be a quadrilateral $ABCD$  (the bigger numbers of vertices are treated similarly).
     
      Assuming the contrary (each vertex has an emanating ridge edge), consider two cases:
      
      (1)  None of the edges of $ABCD$ is a ridge.  These points have one more incident edge of the chamber. All of them are ridges, which contradicts Lemma \ref{LemmaR}, (2).
      
      (2) Assume $AB$ is a ridge. Then either $C$ or $D$ has no incident ridge edges by Lemma \ref{LemmaR}, (2).

   \end{proof}
   So it remains to show that not all the bounded faces of $\mathcal{B}(C)$ are triangles.

    Consider the sphere at infinity. Each wedge intersects it by \textit{a lune} which is the union of two geodesics connecting the pair of antipodal points that correspond to the ridge of the wedge. There is a natural circle ordering on the lunes since they come from vertices on the curve. Two neighbour lunes necessarily have a common geodesic segment, see Fig. \ref{lunes}. If the lunes are not neighbors, we may assume by genericity that they intersect transversally.

      \begin{figure}[h]
 \includegraphics[width=0.9\linewidth]{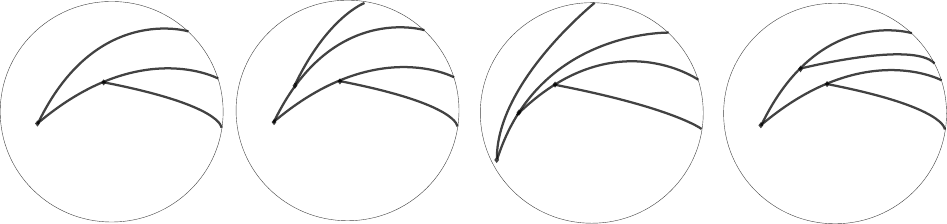}
  \caption {Lunes on the  sphere at infinity: two lunes, and some different positions of three neighbour lunes. }
  \label{lunes}
\end{figure}
    
    The union of lunes yields a tiling  $T$ of the sphere into some spherical polygons. A simple analysis shows that the vertices of the tiling have valence $4$ with the following possible exception at
    the endpoints of the lunes:

    \begin{lemma}
    Each lune has either has two endpoints of valency $3$, or one endpoint of valency $2$ and the other endpoint of valency $4$.
    All the other vertices of $T$ have valency $4$. 
     \end{lemma}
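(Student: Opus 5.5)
The plan is to describe the tiling $T$ locally, one point of the sphere at infinity at a time. The picture of each lune is standard. The wedge of a vertex $v$ with edges $l,l'$ is bounded by two half-planes lying in the normal planes $h=l^\perp$ and $h'=l'^\perp$, which meet along the ridge. At infinity, each half-plane becomes a great half-circle joining the two antipodal points $\pm r_v$ given by the direction of the ridge. So a lune is two half great circles with common endpoints $\pm r_v$, and each lune contributes exactly these two endpoints as special points.

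The first step handles vertices of $T$ that are not endpoints of lunes. By genericity (as in the paragraph before Fig.~\ref{lunes}), two non-neighbouring lunes cross transversally at interior points of their arcs. Such a crossing is a point where two great-circle arcs meet, so it has valency $4$. The same holds at an interior crossing between neighbouring lunes outside their common segment. No three arcs pass through one point, because that would impose an extra linear condition on the directions of the edges of $C$.

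The second step, which carries the content, is to analyse the endpoints via neighbouring lunes. Let $v,w$ be consecutive vertices of $C$ joined by the edge $e$. Both wedges of $v$ and $w$ have a half-plane in planes orthogonal to $e$. These planes are parallel, so at infinity they give the same great circle $e^\perp$. Both lunes therefore contain arcs of $e^\perp$, and these arcs overlap along a common geodesic segment.

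The ridge directions $r_v$ and $r_w$ both lie on $e^\perp$, since each ridge is orthogonal to $e$. So the endpoints $\pm r_v$ of the lune of $v$ lie on the great circle carrying the arc belonging to the lune of $w$. Two cases arise.

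\emph{Case 1.} The arc of $e^\perp$ belonging to $w$'s lune passes through $r_v$ (one of $\pm r_v$). Then at that point three arcs meet: the two arcs of $v$'s lune and the continuing arc of $w$'s lune. This gives valency $3$.

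\emph{Case 2.} The arc of $w$'s lune ends on $e^\perp$ before reaching $r_v$. Then that point of $v$'s lune is an endpoint of the shared segment only.

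Each lune has two endpoints and two neighbours, one sharing each of its circles. The second neighbour, through the other circle $e'^\perp$ with $e'$ the other edge at $v$, plays the same role at the endpoints on $e'^\perp$. The half great circles of $v$'s lune are complementary in a precise sense. On $e^\perp$, the half-plane in the wedge of $v$ is the one on the side where $v$ is a maximum. The same is true on $e'^\perp$, and this fixes whether the arcs of the neighbouring lunes extend through $r_v$ or $-r_v$. A careful sign analysis using the wedge convention should show one of two outcomes. Either each endpoint receives exactly one extra incident arc, giving valency $3$ at both. Or one endpoint receives two extra arcs, one from each neighbour continuing, giving valency $4$, and the other receives none. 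An endpoint with no extra arcs is where the two half-circles of the lune merge into the neighbouring arcs, giving valency $2$.

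The main obstacle is this sign bookkeeping: showing that the two neighbours' overlaps are distributed between $r_v$ and $-r_v$ in exactly these two patterns. I would first fix coordinates with $e$ along one axis and compute which half of $e^\perp$ each of the two wedges (of $v$ and of $w$) uses. The geodesic through both $\pm r_v$ and $\pm r_w$ is then split into four arcs, and the union of the two lunes' arcs on it covers a contiguous set whose endpoints are determined by the maximum conditions. Counting the arcs incident to $\pm r_v$ over both neighbouring circles then gives the total extra valency $2$, distributed either as $1+1$ or $2+0$. This yields the stated dichotomy.
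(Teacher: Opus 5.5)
Your framework matches the paper's (its proof is only a picture of the relative positions of neighbouring lunes), and your first step, valency $4$ at transverse crossings, is fine. But the step that actually proves the lemma is missing. You defer it to a ``careful sign analysis'' that ``should show'' the $1+1$ or $2+0$ split, and your final count simply presupposes that each neighbour contributes exactly one extra arc at $\pm r_v$. Your Case 2 is also misdescribed. If the arc of $w$ does not contain $r_v$, then $r_v$ does not lie on the shared segment at all; the shared segment runs from the endpoint of $v$'s lune covered by $w$'s arc to an endpoint of $w$'s lune. Similarly, at a valency-$2$ endpoint the two half-circles of $v$ do not merge into neighbouring arcs, since near that point they belong to $v$ alone. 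It is at the valency-$4$ endpoint that both continue as neighbours' arcs.

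The missing observation is elementary and needs no wedge convention. Use an angular coordinate on $e^\perp$. The arc of $v$ is a closed half-circle $[a,a+\pi]$ with endpoints $\pm r_v$, and the arc of $w$ is $[b,b+\pi]$ with endpoints $\pm r_w$. By genericity $r_w\neq \pm r_v$, since otherwise three consecutive edges would be coplanar. After relabelling, $a<b<a+\pi$. Then $a+\pi$ is an interior point of $w$'s arc, $a\notin[b,b+\pi]$, and the shared segment is $[b,a+\pi]$. So for each neighbour, exactly one of $\pm r_v$ is in your Case 1 and the other is in Case 2, automatically. The other great circle of $w$ meets $e^\perp$ only at $\pm r_w$, and generically no non-neighbouring lune passes through $\pm r_v$. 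Hence the two neighbours add exactly one arc each, giving valencies $3,3$ (split $1+1$) or $4,2$ (split $2+0$). The signs only decide which split occurs, and both do occur. Let $z$ be the other neighbour of $v$, let $x\neq v$ be the other neighbour of $w$ and $y\neq v$ the other neighbour of $z$, and take $r_v$ along $(w-v)\times(z-v)$. Then the arc of $w$ contains $r_v$ iff $\det(w-v,z-v,x-v)>0$, and that of $z$ iff $\det(w-v,z-v,y-v)>0$. So the split is $2+0$ exactly when $x$ and $y$ lie on the same side of the plane through $v,w,z$, and no sign computation could rule out either pattern. With this counting in place, your argument is complete.
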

     
     \begin{proof}
       See Fig. \ref{lunes} for some possible relative positions of lunes. The reader can easily fill in all the missing cases.
     \end{proof}
    
      \begin{lemma}
    Each tile of $T$ fits in an open hemisphere.
    Therefore, no tile is a bigon (no tile is a lune).
     \end{lemma}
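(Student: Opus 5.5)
We need to show that each tile of $T$ fits in an open hemisphere. The idea is to relate tiles to chambers, and then use part (1) of the genericity lemma (no chamber contains a straight line).

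\textbf{Step 1: tiles as sections of chambers.} Each tile $\tau$ of $T$ is a connected component of the complement of the union of the lunes. So $\tau$ is the set of asymptotic directions of an unbounded chamber $P$, read on the sphere at infinity. To see this, fix a point $y_0$ and take a ray from $y_0$ whose direction $u$ lies in the interior of $\tau$. Every wedge is a union of two half-planes, so its asymptotic directions are exactly its lune. Hence, far out, the ray meets no wedge: it eventually stays in a single chamber $P$. Moreover the direction cone $\mathrm{rec}(P)$ of this chamber, meaning all directions along which $P$ is unbounded, is exactly $\overline{\tau}$. Two directions in $\tau$ can be joined inside $\tau$, and for large radius the corresponding arcs on a big sphere avoid all wedges. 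So they lie in the same chamber.

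\textbf{Step 2: the key step.} Suppose $\tau$ does not fit in any open hemisphere. I would argue with convexity of a suitable region. Far from the origin, $P$ looks like the cone over $\tau$. This cone is cut out by the finitely many half-planes of the wedges bounding it, and each of these passes near a ridge. If $\overline\tau$ is not contained in an open hemisphere, then either it contains a pair of antipodal points, or its convex hull contains $0$.

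In the first case, some direction $u$ and its opposite $-u$ both lie in $\overline\tau$. I would then show that a line parallel to $u$, chosen far enough out (at distance $R$ from the origin along a suitable transverse direction), crosses no wedge. This contradicts part (1) of the genericity lemma. The justification is that a line in direction $u$ meets a half-plane only if $u$ is transverse to it. The half-planes that $u$ can cross are exactly those whose lunes separate $u$ from $-u$ inside $\tau$, and there are none, since $u$ and $-u$ are connected within $\tau$.

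\textbf{Main obstacle.} The hardest part is making this last claim rigorous, for two reasons. First, bounded parts of the wedges could still cut the chosen line. I would handle this by translating the line to infinity within the cone over $\tau$, so that the only obstructions are wedges whose lunes meet $\tau$, and there are none. Second, if $\tau$ is not contained in a hemisphere but contains no antipodal pair, its closure is non-convex. In that case I would argue via the boundary arcs: tile edges are great-circle arcs, so each tile is a spherical polygon bounded by geodesic arcs of lunes. Going around its boundary, the total turning forces it to be locally convex at its vertices of valence $4$ and $3$. A locally convex spherical polygon either lies in an open hemisphere or contains antipodal points. That reduces this case to the first one.

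\textbf{Consequence.} A lune is bounded by two half great circles with antipodal endpoints, so it is not contained in any open hemisphere. Therefore no tile is a bigon.
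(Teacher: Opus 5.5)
There is a genuine gap in Step~2, which is where the whole lemma is proved. The criterion you use is not correct. You claim that a line in direction $u$ can only cross wedges whose lunes separate $u$ from $-u$ inside $\tau$; but no lune curve meets $\tau$ at all, so this criterion is vacuous.

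Write $D_v=\{y:\langle y-v,e\rangle>0,\ \langle y-v,e'\rangle>0\}$ for the dihedral region bounded by the wedge $W_v$, where $e,e'$ are the unit edge directions at $v$. Let $\Lambda_v\subset S^2$ be its open lune region at infinity. If $u\in\Lambda_v$, then \emph{every} line in direction $u$ crosses $W_v$, because $\langle y-v,e\rangle$ runs from $-\infty$ to $+\infty$ along it.

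The global maximum of $SQD_y$ is always attained at a vertex, so every tile lies in some $\Lambda_v$. Hence for $u\in\tau$ the line you want never exists. It can only exist if $u$ and $-u$ are both boundary points of $\tau$. The only antipodal pair in a closed lune $\overline{\Lambda_v}$ is its pair of endpoints, so this forces $u$ to be parallel to the ridge of every vertex $v$ whose lune contains $\tau$. To get a contradiction you therefore need two lunes containing $\tau$, and your proposal never extracts that input.

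Your argument pushes the line out inside the cone over $\tau$, using that $u$ and $-u$ are connected in $\tau$. That does not address this situation. It would need a whole closed half great circle from $-u$ to $u$ inside $\tau$, which does not follow from connectedness. The proposed repair by local convexity also fails. At a lune endpoint of valency $2$, which does occur by the valency lemma, the tile outside the lune has angle $2\pi-\alpha>\pi$.

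The missing idea is to apply Lemma~\ref{LemmaKnot4} directly at infinity, which is what the paper does. For $y=Ru$ with $R\to\infty$, the vertex $v$ is a maximum of $SQD_y$ if and only if $u\in\Lambda_v$. Since $C$ is knotted, $N(y)\geq 4$, so every tile lies in $\Lambda_{v_1}\cap\Lambda_{v_2}$ for two distinct vertices.

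The closure of this intersection is cut out by four closed hemispheres. An antipodal pair in it would have to be the endpoints of both lunes, i.e.\ the ridges of $v_1$ and $v_2$ would be parallel, which genericity excludes. Hence the closure is a pointed cone and lies in an open hemisphere; in particular no tile is a lune.

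Part~(1) of the genericity lemma is itself derived from Lemma~\ref{LemmaKnot4} by the same kind of reasoning. So the detour through chambers and lines only adds the difficulty described above.

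Also, in Step~1 the identification $\mathrm{rec}(P)=\overline\tau$ is unjustified, since a chamber may reach infinity along several tiles. This is not what breaks the argument.
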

     \begin{proof}  {
      By Lemma \ref{LemmaKnot4}, each point of the sphere (and therefore, each tile)  is covered by at least two lunes. }
     \end{proof}

     \begin{lemma}
    Not all the tiles of $T$ are triangles and quadrilaterals.
     \end{lemma}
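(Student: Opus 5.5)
We argue by Euler characteristic on the sphere at infinity. Let $V,E,F$ be the numbers of vertices, edges and tiles of $T$, so $V-E+F=2$. Assume, contrary to the statement, that every tile is a triangle or a quadrilateral; by the previous lemma there are no bigons (and no monogons). Then $2E=\sum_{\text{tiles}}(\text{number of sides})\le 4F$, so $F\ge E/2$.

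For the vertices we use the valency lemma. Each lune contributes either two endpoints of valency $3$, or one endpoint of valency $2$ and one of valency $4$; all other vertices have valency $4$. Writing $v_k$ for the number of vertices of valency $k$, this gives $v_3=2a$ and $v_2=b$, where $a$ is the number of lunes of the first type and $b$ the number of the second type. Then
$$2E=\sum_k k\,v_k = 4V - v_3-2v_2 = 4V-2a-2b = 4V-2n,$$
where $n=a+b$ is the number of lunes, i.e. the number of vertices of $C$. So $E=2V-n$.

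Plugging in, $2=V-E+F\ge V-E+E/2=V-E/2=V-(2V-n)/2=n/2$, i.e. $n\le 4$. A knotted piecewise linear curve needs at least $6$ vertices (polygons with at most $5$ edges are unknotted), so $n\ge 6$, a contradiction. Since knottedness is assumed here, this settles it; if needed, Lemma \ref{LemmaKnot4} together with the fact that every point is covered by at least two lunes can also be used to exclude small $n$.

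The main obstacle is the bookkeeping at degenerate spots. One must make sure the valency count $\sum k v_k=4V-2n$ is correct: an endpoint of valency $2$ is not really a vertex of the tiling, so it either must be counted as a vertex with two edges (which keeps Euler's formula valid), or else the edges through it must be merged. We treat it as a vertex throughout. Then a tile touching a valency-$2$ vertex acquires an extra "side", and with that convention the bound $2E\le 4F$ is exactly what the hypothesis gives. One also has to check that $T$ is a genuine cell decomposition (the tiling graph is connected and each tile is a disc), which follows from each tile lying in an open hemisphere and from the union of lunes being connected via the common geodesic segments of neighbouring lunes.
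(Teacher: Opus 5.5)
Your proof is correct and is the paper's argument in combinatorial form: the paper makes every triangle a unit equilateral triangle and every quadrilateral a unit square, so a vertex of valency $k$ has angle defect at least $\frac{\pi}{2}(4-k)$, and comparing the total defect $4\pi$ with $\frac{\pi}{2}\sum_k (4-k)v_k=\pi n$ is exactly your inequality $2\ge V-E/2=n/2$ multiplied by $2\pi$. You also state explicitly a fact the paper leaves implicit, namely that a knotted polygon has more than $4$ vertices (in fact at least $6$), which is what turns $n\le 4$ into a contradiction.
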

   \begin{proof}
     We use the following  common approach: assuming the contrary, replace each triangular tile by an equilateral triangle  with unit edge lengths, and each quadrilateral by a square  also with the unit edge lengths.  They patch together to a sphere with almost everywhere flat metric. The curvature is concentrated at the vertices, and each vertex contributes the \textit{angular defect} (it equals the difference of $2\pi$  and the sum of the incident angles). The total curvature  can be estimated as follows: each vertex of valency $4$ contributes at least zero; each vertex of valency $3$ contributes at least $\pi/2$, and each vertex of valency $2$ contributes at least $\pi$. Since the sum of the curvatures over all the vertices equals $4\pi$,  we conclude that there are at most $4$ lunes. Since the number of lunes equals the number of vertices of $C$, we have a contradiction.
   \end{proof}

   We proceed with the proof of the proposition. So the tiling $T$ has a tile $\tau$ with at least $5$ vertices.  Let us assume that there are exactly $5$ vertices; the bigger numbers are treated similarly. There is an unbounded chamber $\sigma$ adjacent to this tile.
   All the vertices of $\sigma$ are trivalent because of genericity.
   
    \begin{figure}[h]
 \includegraphics[width=0.5\linewidth]{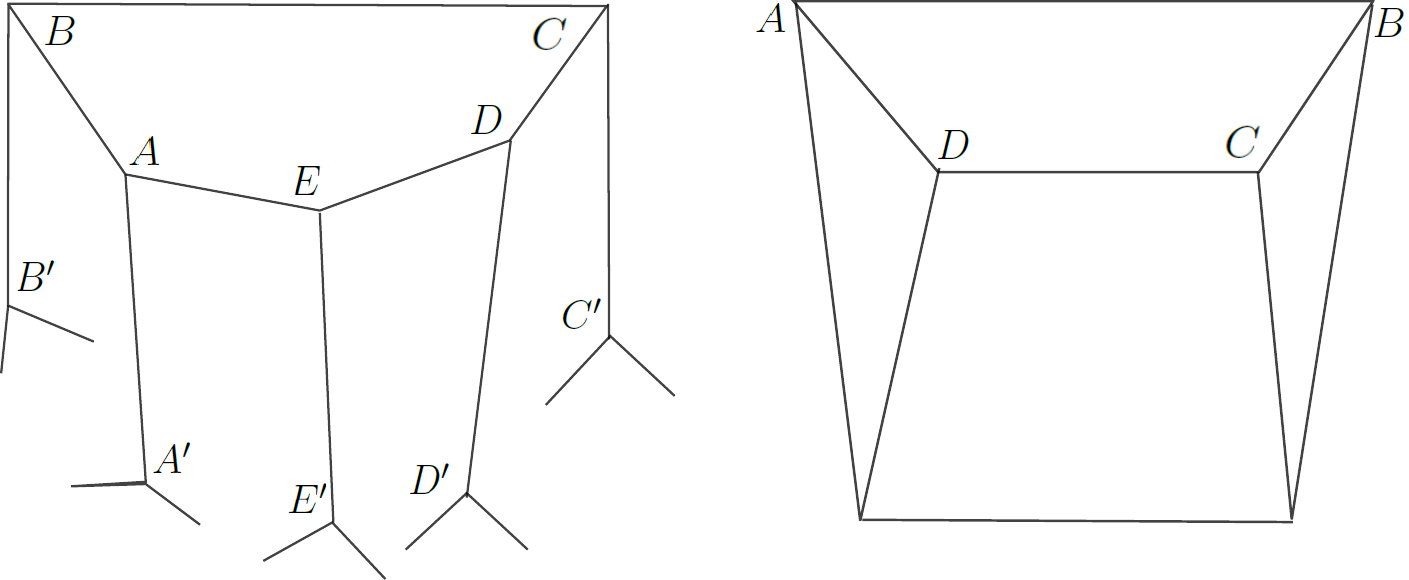}
  \caption {The chamber $\sigma$  (left). The proof does not work for quadrilateral $\tau$ (right).  }
  \label{chamber}
\end{figure}
   By similar curvature count we conclude that $\sigma$ has a bounded face with at least $4$ vertices.
   
   First observe that all the points $A',B',C',D',E'$ are distinct (see Fig. \ref{chamber}).
   
   Next, replace the tile $\tau$ by a regular pentagon $ABCDE$, replace  all other faces by regular polygons with equal edge lengths.
   Assume that all the bounded faces are triangles. Then the contribution of $A,B,C,D,E$ equals at least $$5\cdot (2\pi-(\pi/2+\pi/2+{3}\pi/{5} ))=2\pi.$$
    The contribution of $A',B',C',D',E'$ is at least $$5\cdot(2\pi-(\pi+\pi/3))=\frac{10\pi}{3},$$
    which is already bigger than $4\pi$.\footnote{The genus of the chamber might be non-zero, but in this case we come to a contradiction as well.}
    
    \medskip
  \textbf{Remark.}  For  a quadrilateral $\tau$  the proof breaks down: the points
  $A',B',C',D'$ might be not all distinct, see Fig. \ref{chamber}, right.
  
  The lemma (and hence the proposition)   are proven.
\end{proof}

As a by-product of the above, we get the following.
\begin{prop}
  If $C$ is  a knotted piecewise linear curve, then there exists a linear function whose restriction on $C$ has at least $8$ critical points (that is, minima and maxima).
\end{prop}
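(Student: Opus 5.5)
The plan is to read this off the sphere at infinity, via the tiling $T$ built from the lunes in the proof of Proposition 1. The guiding observation is that for a unit vector $u$, the linear function $\ell_u(x)=\langle x,u\rangle$ is the limit of the rescaled squared distance $SQD_{y}$ with $y=-Ru$, $R\to\infty$. Indeed,
$$SQD_{-Ru}(x)=|x|^2+2R\langle x,u\rangle+R^2,$$
so after subtracting $R^2$ and dividing by $2R$ we get $\ell_u$ plus a term of order $1/R$. Hence a vertex $v$ of $C$ is a local maximum (resp. minimum) of $\ell_u|_C$ exactly when $u$ lies in the limiting open region cut out on the sphere at infinity by the wedge of $v$, i.e. inside the lune of $v$ (up to the antipodal sign convention). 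Since $\ell_u|_C$ is piecewise linear and generic, all its critical points sit at vertices. Its maxima and minima alternate, so the number of critical points is twice the number of maxima.

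The key steps are as follows.

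First, I will identify the number of maxima of $\ell_u|_C$ with the number of lunes containing $u$ (taking $-u$ where needed). With this identification, a tile of $T$ covered by $k$ lunes gives linear functions with exactly $2k$ critical points. Crossing one geodesic arc of a lune changes this count by $2$, which is the spherical analogue of crossing a wedge.

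Second, I will use the fact, established in the proof of the proposition, that $T$ has a tile $\tau$ with at least $5$ vertices. I will check that at a generic vertex of $T$ of valency $4$ where two lunes cross transversally, the four incident tiles carry lune-counts $k,k+1,k+1,k+2$ for some $k$. Each tile is covered by at least two lunes; this is the content of the lemma saying each tile lies in an open hemisphere, which in turn rests on Lemma \ref{LemmaKnot4}, because a linear function with only $2$ critical points would yield the spanning-disk argument of that lemma. Therefore $k\geq 2$, and the tile with count $k+2\geq 4$ gives $8$ critical points.

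Third, I will make sure such a transverse crossing vertex actually exists. If every vertex of $T$ were an endpoint of a lune, or lay on a shared segment of neighbouring lunes, then $T$ would consist only of the degenerate configurations from the lune-valency lemma. The curvature count from the lemma on triangles and quadrilaterals, applied to the $\geq 5$-gon $\tau$, rules this out once $C$ has more than $4$ vertices, which holds because $C$ is knotted.

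I expect the main obstacle to be the first step: pinning down the sign and orientation conventions. Concretely, one must check that the part of the sphere at infinity bounded by the lune of $v$ is exactly the set of directions for which $v$ is a maximum of the corresponding linear function, and that overlapping lunes add up rather than cancel. I would verify this directly from the description of the wedge as the boundary of the region where $v$ maximizes $SQD$.

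Once this is in place, a simpler alternative for the last step is available. It suffices to show that some point of the sphere is covered by at least $4$ lunes, and this follows from the parity of coverings (each count is at least $2$, and counts change by exactly $1$ across arcs) together with the existence of a transverse crossing.
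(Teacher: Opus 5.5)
Your strategy is the paper's own: read the number of maxima of $\langle x,u\rangle$ off the lune tiling at infinity, locate a transverse crossing of two lunes, and use Lemma~\ref{LemmaKnot4} to get $k\ge 2$. It inherits the paper's gap, and that gap cannot be closed, because the statement is false. For generic $u$ the function $\langle x,u\rangle$ is strictly monotone on each edge, so its maxima and minima are vertices of $C$. The trefoil has stick number $6$. Hence a generic hexagonal trefoil is a knotted polygon on which every generic linear function has at most $6$ critical points. Writing $m(u)$ for the number of lunes containing $u$, one has $2\le m(u)\le 3$ for every generic $u$. The problem is not only a small number of vertices. Take fine polygons inscribed in a smooth trefoil whose height functions all have at most $3$ maxima (such a trefoil exists: Kuiper computed its superbridge index to be $3$). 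These are counterexamples with arbitrarily many vertices, because a cyclic sequence of samples of a function has no more local maxima than the function.

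The step that fails is your claim that crossing an arc of $T$ always changes the lune count by one, so that the four tiles at a transverse crossing carry $k,k+1,k+1,k+2$. Take adjacent vertices $v,w$. The lunes of $v$ and $w$ both have a side on the great circle orthogonal to the edge $vw$. Across their common segment the maximum just passes from $v$ to $w$, so the count does not change. In $\mathbb{R}^3$ these two sides come from the normal planes of $vw$ through $v$ and through $w$; crossing the slab between them gives a birth followed by a death, and the slab collapses at infinity. A crossing lying on such a shared segment therefore gives at best counts $k,k,k+1,k+1$. What you actually need is a transverse crossing of two private arcs, each bounding a single lune near the crossing point. Your third step gives no argument for this: the curvature count only yields a tile with at least five vertices and says nothing about which arcs meet there. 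For the hexagonal trefoil no such crossing exists, since it would produce a tile covered by $k+2\ge 4$ lunes. The parity shortcut fails for the same reason, because its premise that every arc changes the count by exactly $1$ is false on shared segments. The paper's one-line proof rests on the same unjustified assertion, namely that crossing a sheet at infinity is always a birth or death of a maximum--minimum pair.
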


\begin{proof}
There are points of transverse  intersections of two different sheets on the infinity. Since for every chamber, the number of normals is at least $4$, and crossing of a sheet amounts to death (or birth) of yet another maximum and minimum, we get the desired.
\end{proof}

\textbf{Remark.}
For closed piecewise linear curves the number of normals is always finite and one has $N(y) \le 2v$, where $v$ is the number of vertices.

For any $n$ there are polygons, such that $N(y) = 2v$. The concept of aED-degree is defined by the average of $N(y)$ with respect to a measure, cf \cite{DHOST}. So aED-degree $\le 2v$.



\begin{thebibliography}{99}



\bibitem{DHOST}
J.~Draisma, E.~Horobe\c{t}, G.~Ottaviani, B.~Sturmfels and R.~R. Thomas.
 \emph{The Euclidean distance degree of an algebraic variety.}
 Found. Comput. Math. 16  (2016), no 1, 99-149.



\bibitem{IzmFuchs} D. Fuchs, I. Izmestiev, M. Raffaelli, \textit{Differential geometry of space curves:
forgotten chapters},   The Mathematical Intelligencer, 
Vol. 46 (2024),  9--21.







\bibitem{Heil} E. Heil, {\em Concurrent normals and critical points under weak smoothness assumptions},
In: Discrete geometry and convexity (New York, 1982), volume 440 of Ann. New York
Acad. Sci., 170-178. New York Acad. Sci., New York, 1985.






\bibitem{NasPanSiersma} I. Nasonov, G. Panina, D. Siersma, \textit{Concurrent normals problem for convex polytopes and Euclidean distance degree},
 Acta Math. Hung., 174 (2024),  522–538.

\bibitem{NasPan} I. Nasonov, G. Panina, \textit{Each simple polytope from $\mathbb{R}^n$ has a point with $2n+4$ normals to the boundary}, Zap. POMI, 549,  2025, 161--169.


\bibitem{NasPan1} I. Nasonov, G. Panina, \textit{Each generic polytope in $\mathbb{R}^3$  has a point with ten normals to the boundary}, arXiv:2411.12745

\bibitem{Pardon} J. Pardon, {\em Concurrent normals to convex bodies and spaces of morse functions}, Math. Ann.,
352(2012), no. 1, 55-71.

\bibitem{Petrunin}
A. Petrunin, S. Stadller, \textit{Six proofs of the F\'{a}ry-Milnor theorem,} Amer. Math. Monthly 131 (2024), no. 3, 239-251.


\bibitem{Uribe-Vargas} R.  Uribe-Vargas,  \textit{On Vertices, focal curvatures and differential geometry of space curves},   Bulletin of the Brazilian Mathemath. Soc.,
Vol. 36 (2005),  285–307.

\end{thebibliography}
\end{document}